\newcommand{\ddt}[1]{\frac{\partial #1}{\partial t}}
\newcommand{\Ric}{\textnormal{Ric}}
\newcommand{\Rm}{\textnormal{Rm}}
\newcommand{\cN}{\mathcal{N}}
\newcommand{\vol}{\textnormal{Vol}}
\newcommand{\var}{\textnormal{Var}}
\newtheorem{theorem}{Theorem}[section]
\newtheorem{lemma}[theorem]{Lemma}
\newtheorem{proposition}[theorem]{Proposition}
\numberwithin{equation}{section}
\theoremstyle{definition}
\theoremstyle{definition}
\newtheorem{definition}[theorem]{Definition}
\begin{document}

\title{On the improved no-local-collapsing theorem of Ricci flow}

\author{Wangjian Jian}

\address{Academy of Mathematics and Systems Science$\And$Hua Loo-Keng Key Laboratory of Mathematics, Chinese Academy of Sciences, Beijing, 100190, China.}

\email{wangjian@amss.ac.cn}

\begin{abstract}  In this note we derive an improved no-local-collapsing theorem of Ricci flow under the scalar curvature bound condition along the worldline of the basepoint. It is a refinement of Perelman’s no-local-collapsing theorem.
\end{abstract}

\maketitle


\section{Introduction}

In 1982, Hamilton \cite{Ham} introduced the Ricci flow equation, which is defined by
$$\ddt {g(t)}=-2\Ric(g(t)).$$
Since then, Ricci flow has became a very powerful tool in understanding the geometry and topology of Riemannian manifolds. In Perelman's groundbreaking work \cite{Per1} \cite{Per2} \cite{Per3}, the Ricci flow was used to give the proof of the Poincar\'e and geometrization conjecture.

The no-local-collapsing theorem of Perelman is important in his work of Ricci flow. In his work, whenever we need to take a Gromov-Hausdorff limit, we need to apply the no-local-collapsing theorem. Perelman introduced two approaches to prove the no-local-collapsing theorem: $\mathcal{W}$-entropy and spacetime geometry, and both of them have became very useful tool in the study of Ricci flow theory.

Let us first recall Perelman's no-local-collapsing theorem.

\begin{theorem}[Perelman \cite{Per1}] \label{nlc1}  For any $A<\infty$ and dimension $n$, there exists $\kappa=\kappa(n, A)>0$ such that the following holds:

Let $(M,(g(t))_{t\in [-r^2_0,0]})$ be a Ricci flow on compact, $n$-dimensional manifold that satisfies:
\begin{enumerate}
\item $|\Rm|\leq r_0^{-2}$, on $P(x_0, -r_0^2, r_0; r_0^2)$; 
\item $\vol_{g(-r_0^2)}(B_{g(-r_0^2)}(x_0, r_0))\geq A^{-1}r_0^n$.
\end{enumerate}
Then for any other metric ball $B_{g(0)}(x, r)\subset B_{g(0)}(x_0, Ar_0)$ with $r\leq r_0$ and 
\begin{equation}\label{Equ: Rm bound on final time ball}
|\Rm|\leq r^{-2}, ~on~ P(x, 0, r; -r^2),
\end{equation}
we have
\begin{equation}\label{Equ: nlc1}
\vol_{g(0)}(B_{g(0)}(x, r))\geq \kappa r^n.
\end{equation}
\end{theorem}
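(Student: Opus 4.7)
I would follow Perelman's original $\mathcal{L}$-geometry approach and argue by contradiction, using the monotonicity of his reduced volume. Assume the conclusion fails for $\kappa$ arbitrarily small. With basepoint $(x, 0)$, introduce the reduced distance $\ell(q, \tau)$ and the reduced volume
\[
\tilde V(\tau) \;=\; \int_M (4\pi\tau)^{-n/2}\, e^{-\ell(q,\tau)}\, dV_{g(-\tau)}(q),
\]
where $\tau=-t\in [0, r_0^2]$ is backward time. The plan is to derive contradictory estimates on $\tilde V(r_0^2)$: an upper bound $\tilde V(r_0^2)\leq\delta(\kappa)\to 0$ from the collapsing of $B_{g(0)}(x,r)$, and a lower bound $\tilde V(r_0^2)\geq c(n,A)>0$ from assumptions (1)--(2).

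\textbf{Upper bound from collapsing.} The curvature hypothesis (\ref{Equ: Rm bound on final time ball}) on $P(x, 0, r; -r^2)$ makes $g(0)$ and $g(-r^2)$ uniformly comparable on $B_{g(0)}(x, r)$ and the parabolic neighborhood quantitatively close to Euclidean after rescaling by $1/r$. Splitting $\tilde V(r^2)$ at time $-r^2$ into the backward-flow image of $B_{g(0)}(x, r)$ and its complement: on the image, the $g(-r^2)$-volume is $<C\kappa r^n$ and the integrand $(4\pi r^2)^{-n/2}e^{-\ell}$ is bounded (by the curvature control on the good region, which furnishes short $\mathcal{L}$-paths and hence a uniform lower bound on $\ell$), giving contribution $\lesssim\kappa$; on the complement, the standard lower bound $\ell(q,r^2)\gtrsim\mathrm{dist}_{g(-r^2)}(x,q)^2/r^2 - C(n)$ yields super-polynomially small contribution in $\mathrm{dist}/r$. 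Hence $\tilde V(r^2)\leq\delta(\kappa)\to 0$, and by Perelman's reduced-volume monotonicity, $\tilde V(r_0^2)\leq\tilde V(r^2)\leq\delta(\kappa)$.

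\textbf{Lower bound from (1) and (2).} To force $\tilde V(r_0^2)\geq c(n,A)$, I would exhibit a set $\Omega\subset B_{g(-r_0^2)}(x_0,r_0)$ with $\vol_{g(-r_0^2)}(\Omega)\gtrsim r_0^n$ on which $\ell(\cdot, r_0^2)\leq C(n,A)$. The natural candidate is an admissible $\mathcal{L}$-competitor $\gamma:[0,r_0^2]\to M$ from $(x,0)$ to $(x_0, -r_0^2)$ that first traverses a $g(0)$-minimizing geodesic from $x$ to $x_0$ (of length $\leq Ar_0$, provided by $x\in B_{g(0)}(x_0, Ar_0)$) over a short $s$-interval $[0,s_0]$, and then stays on the worldline $s\mapsto(x_0,-s)$ for $s\in[s_0,r_0^2]$, where assumption (1) uniformly controls $R$, metric comparison and the Jacobian. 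The $\sqrt{s}$-weight in the $\mathcal{L}$-functional is intended to tame the kinetic contribution of the initial segment, yielding $\ell(x_0,r_0^2)\leq C(n,A)$; a gradient/Laplacian estimate for $\ell$ on the controlled parabolic neighborhood of the worldline then spreads this pointwise bound to $\Omega$, and assumption (2) supplies the required volume, producing $\tilde V(r_0^2)\geq c(n,A)$.

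\textbf{Main obstacle.} The delicate step is controlling the scalar-curvature contribution $\int_0^{s_0}\sqrt{s}\,R(\gamma(s),-s)\,ds$ on the fast initial segment, which traverses a region where no a priori curvature bound is assumed: since the $\mathcal{L}$-functional requires control of $R$ from above (a universal lower bound from the evolution equation $\partial_t R = \Delta R + 2|\Ric|^2$ alone is insufficient), a naive explicit-path estimate does not close. The standard remedies are either to replace the single competitor by an $\mathcal{L}$-exponential-map argument with Jacobi-field/Jacobian monotonicity (avoiding the need to bound $\mathcal{L}$ along a specific curve), or to run a Hamilton--Perelman compactness/rescaling argument on a hypothetical counterexample sequence and extract contradictions in a smooth limit. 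Carrying out this balancing, and then promoting the single-point reduced-distance bound to a positive-measure lower bound on $\tilde V(r_0^2)$ via the Laplacian comparison for $\ell$, constitutes the technical heart of Perelman's argument.
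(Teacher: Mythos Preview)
The paper does not prove Theorem~\ref{nlc1}; it is stated as Perelman's result and serves only as background for the paper's own Theorem~\ref{nlc2}. There is therefore no proof in the paper to compare your proposal against directly.

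That said, your outline is Perelman's reduced-volume argument and the architecture is correct: small $\tilde V$ near $\tau=0$ from the collapsing hypothesis, a lower bound on $\tilde V(r_0^2)$ from the controlled region around $x_0$, and a contradiction via monotonicity. You have also correctly isolated the one genuinely delicate point --- controlling the $R$-contribution to $\mathcal L$ on the segment from $x$ to $x_0$ through uncontrolled territory. Your suggested remedies, however, do not match what is actually done: the $\mathcal L$-exponential/Jacobian monotonicity is what underlies the \emph{upper} bound on $\tilde V$ near $\tau=0$, not the lower bound at $\tau=r_0^2$; and no compactness/limit extraction appears in Perelman's proof of his Theorem~8.2 --- the concatenated curve is estimated directly, with the parameter choices arranged so that the kinetic and scalar contributions are both controlled by the given data.

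For contrast, the paper's proof of the related Theorem~\ref{nlc2} replaces the reduced volume by Bamler's Nash-entropy framework. The only $\mathcal L$-curve it ever estimates is the worldline $\tau\mapsto(x_0,-\tau)$, where the scalar-curvature hypothesis along the worldline gives the bound~\eqref{Equ: estimate of reduced length 1}; Proposition~\ref{Prop: estimate of reduced length} converts this into a $W_1$-estimate, and the passage from $x_0$ to the distant point $x$ is then made not by any spacetime curve but by the spatial gradient bound~\eqref{Equ: gradient estimate of Nash entropy} on $\cN^*_s$ at the final time. This route sidesteps the obstacle you identified entirely.
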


Next, let us recall some extensions of this no-local-collapsing theorem.

In \cite{Zh}, Zhang localized Perelman's $\mathcal{W}$-entropy and the differential Harnack inequality, and proved a uniform local Sobolev inequality for the metric ball $B_{g(0)}(x_0, Ar_0)$ (see \cite[Theorem 6.3.2]{Zh}). Then he showed that the curvature condition (\ref{Equ: Rm bound on final time ball}) in Theorem \ref{nlc1} can be relaxed to scalar curvature bound at the final time slice, namely
\begin{equation}\label{Equ: scalar curvature bound on final time ball}
R \leq r^{-2}, ~on~ B_{g(0)}(x, r).
\end{equation}

Then, Wang \cite{W} gave an independent proof of this improved no-local-collapsing theorem, where he can relax the curvature condition (1) in Theorem \ref{nlc1} to the Ricci curvature bound
\begin{equation}\label{Equ: Ricci curvature bound on initial parabolic ball}
|\Ric|\leq r_0^{-2}, ~on~ P(x_0, -r_0^2, r_0; r_0^2),
\end{equation}
and Wang also found its application to K\"ahler-Ricci flow on smooth minimal models of general type.

Lately, under the same curvature condition (\ref{Equ: Ricci curvature bound on initial parabolic ball}), Tian-Zhang \cite{TiZ} established a relative volume comparison theorem of Ricci flow, that is, they remove the non-collapsing condition (2) in Theorem \ref{nlc1} for the initial metric, and obtain
\begin{equation}\label{Equ: relative volume comparison}
\frac{\vol_{g(-r_0^2)}(B_{g(-r_0^2)}(x_0, r_0))}{r_0^n}\geq \kappa \frac{\vol_{g(0)}(B_{g(0)}(x, r))}{r^n}.
\end{equation}
Using this estimate, Song-Tian-Zhang \cite{STZ} obtained the diameter bound for the long time solution of the normalized K\"ahler-Ricci flow under the assumption of a local Ricci curvature bound.

The author and Song \cite{JS} gave an alternative proof of Tian-Zhang's \cite{TiZ} relative volume comparison theorem by using the Nash entropy; using the idea of the proof, we obtained the diameter bound for the long time solution of the normalzied K\"ahler-Ricci flow without assuming local Ricci curvature bound.

The Nash entropy used in \cite{JS} was introduced by Hein-Naber \cite{HN}, where they obtained heat kernel bounds and $\varepsilon$-regularity theorem for Ricci flow. Later in \cite{Bam20a}, Bamler established systematic results on the Nash entropy and heat kernel bounds on Ricci flow background. With these bases, in \cite{Bam20b} \cite{Bam20c}, Bamler established the compactness theory of Ricci flow and the structure theory of non-collapsed limits of Ricci flows. 

Our main result in this note is the following theorem which improves Perelman's no-local-collapsing theorem.

\begin{theorem} \label{nlc2}  For any $A<\infty$ and dimension $n$, there exists $\kappa=\kappa(n,  A)>0$ such that the following holds:

Let $(M,(g(t))_{t\in [-2r^2_0,0]})$ be a Ricci flow on compact, $n$-dimensional manifold, with a base point $x_0$ that satisfies:
\begin{enumerate}
\item $R(x_0, t)\leq \frac{A}{|t|}$, for $t\in[-r_0^2, 0)$; 
\item $\vol_{g(-r_0^2)}(B_{g(-r_0^2)}(x_0, r_0))\geq A^{-1}r_0^n$.
\end{enumerate}
Then for any other metric ball $B_{g(0)}(x, r)\subset B_{g(0)}(x_0, Ar_0)$ with $r\leq r_0$ and 
\begin{equation}\label{Equ: R bound on final time ball}
R\leq r^{-2}, ~on~ B_{g(0)}(x, r),
\end{equation}
we have
\begin{equation}\label{Equ: nlc2}
\vol_{g(0)}(B_{g(0)}(x, r))\geq \kappa r^n.
\end{equation}
\end{theorem}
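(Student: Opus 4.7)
The plan is to combine Bamler's theory of pointed Nash entropy \cite{Bam20a} with Perelman's $\mathcal{L}$-length in order to replace the parabolic curvature hypothesis (1) of Theorem \ref{nlc1} by the one-dimensional worldline version stated here. I would organize the argument into three steps corresponding to the three places where the stronger hypothesis enters Perelman's original proof.

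The first and hardest step is to show that
$$\mathcal{N}_{x_0, 0}(r_0^2) \geq -C_1(n, A).$$
The worldline bound $R(x_0, t)\leq A/|t|$ enters through Perelman's $\mathcal{L}$-functional based at $(x_0, 0)$: the constant competitor $\gamma(s)\equiv x_0$ on $s\in[0,\tau]$ satisfies
$$\mathcal{L}(\gamma) = \int_0^\tau \sqrt{s}\, R(x_0, -s)\, ds \leq A\int_0^\tau s^{-1/2}\, ds = 2A\sqrt{\tau},$$
so Perelman's reduced length satisfies $\ell_{x_0, 0}(x_0, \tau) \leq A$ for every $\tau \in (0, r_0^2]$. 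Combined with the initial volume lower bound in hypothesis (2) and the standard relationship between the reduced volume and the pointed Nash entropy, this pointwise worldline estimate should upgrade to the desired integrated Nash entropy bound.

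For the second step, I would use Bamler's triangle-type comparison between Nash entropies at different spacetime basepoints, which tolerates a bounded ratio $d_t(x, x_0)/\sqrt{\tau}$, to deduce $\mathcal{N}_{x, 0}(r_0^2) \geq -C(n, A)$ from the Step~1 bound together with $d_{g(0)}(x, x_0) \leq Ar_0$. The monotonicity of $\mathcal{N}_{x, 0}(\tau)$ in the scale parameter then yields $\mathcal{N}_{x, 0}(r^2) \geq -C_2(n, A)$ for every $r\leq r_0$. The third step is to apply Bamler's volume--entropy comparison, the Nash entropy analogue of Zhang's localized Sobolev inequality used in the scalar curvature improvement of Theorem \ref{nlc1}, which combines this Nash entropy lower bound with the final-time scalar bound (\ref{Equ: R bound on final time ball}) on $B_{g(0)}(x, r)$ to deliver $\vol_{g(0)}(B_{g(0)}(x, r)) \geq \kappa r^n$.

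The principal obstacle lies in Step~1. The estimate $\ell_{x_0, 0}(x_0, \tau)\leq A$ controls the reduced length only along the single worldline of $x_0$, whereas the Nash entropy (or the reduced volume) requires integrated information over a full ball in $g(-r_0^2)$. Without a Ricci bound in a parabolic neighborhood of $x_0$, one cannot spread the pointwise worldline estimate by direct $\mathcal{L}$-path comparison as in Perelman; instead one must rely on conjugate heat kernel concentration results, for instance Bamler's $H_n$-center estimate for the measure $K(x_0, 0;\,\cdot\,, -r_0^2)\, dV_{g(-r_0^2)}$. Bridging the gap between the pointwise worldline $\ell$-bound and the averaged Nash entropy lower bound, using only the worldline scalar bound together with the initial volume condition, is the technical heart of the argument.
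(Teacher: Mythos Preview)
Your three-step outline matches the paper's, and Steps~2 and~3 are exactly what is done there. The gap you flag in Step~1 is real, and the paper closes it by a mechanism you have not quite identified. The ``standard relationship between reduced volume and Nash entropy'' is not what is used; instead the argument runs through a $W_1$-distance estimate between the Dirac mass $\delta_{x_0}$ at time $-r_0^2$ and the conjugate heat kernel measure $\nu_{x_0,0;-r_0^2}$. Specifically, the worldline bound $\ell_{x_0,0}(x_0,-r_0^2)\leq A$ gives, via Perelman's inequality $K\geq (4\pi\tau)^{-n/2}e^{-\ell}$, a \emph{lower} bound on $K(x_0,0;x_0,-r_0^2)$. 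Bamler's Gaussian \emph{upper} bound on the same heat kernel (which involves the $H_n$-center you mention, together with the unknown $\cN_{x_0,0}(r_0^2)$ itself) then forces
\[
d^{g(-r_0^2)}_{W_1}\bigl(\delta_{x_0},\,\nu_{x_0,0;-r_0^2}\bigr)\;\leq\; C\bigl(1+A-\cN_{x_0,0}(r_0^2)\bigr)^{1/2}\sqrt{r_0^2}.
\]
Separately, the volume hypothesis~(2) plus Bamler's volume non-\emph{inflation} estimate yields a lower bound not on $\cN_{x_0,0}(r_0^2)$ directly, but on the Nash entropy $\cN^*_{-2r_0^2}(x_0,-r_0^2)$ based at the \emph{earlier} spacetime point. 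Bamler's Harnack inequality for $\cN^*_{-2r_0^2}$ then compares the two basepoints $(x_0,-r_0^2)$ and $(x_0,0)$ with an error controlled by the $W_1$-distance above, producing a self-referential inequality of the schematic form $-\cN^*_{-2r_0^2}(x_0,0)\leq C + C\sqrt{-\cN^*_{-2r_0^2}(x_0,0)}$, which one solves to obtain the desired bound.

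So the missing idea is precisely this: couple Perelman's $\ell$-based heat kernel lower bound with Bamler's Gaussian upper bound at the single point $(x_0,-r_0^2)$ to control the $W_1$-distance along the worldline, then feed that into the Nash-entropy Harnack inequality between $(x_0,-r_0^2)$ and $(x_0,0)$. No integrated $\ell$-information off the worldline is ever needed.
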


In the proof of the theorem, we shall also use the Nash entropy and heat kernel bounds on Ricci flow background, which are establisehed by Bamler in \cite{Bam20a}. The ingredient of the proof is that we need to estimate the $W_1$-distance of two points on the same worldline by using the heat kernel upper bound of Ricci flow.

\noindent{\bf Acknowledgements} The author would like to thank Gang Tian, Jian Song, Zhenlei Zhang, Yalong Shi for inspiring discussions.


\section{Preliminaries}
We recall some basic definitions and results of Ricci flow in this section. This is the framework establisehed by Bamler in \cite{Bam20a}.

Let $(M, (g(t))_{t\in I})$ be a smooth Ricci flow on a compact $n$-manifold with $I\subset\mathbb{R}$ being an interval. Let $(x_0, t_0)\in M\times I$ with $A, T^-, T^+ \geq 0$, the parabolic neighborhood is defined as
$$P(x_0, t_0, A; -T^-, T^+):=B(x_0, t_0, A)\times ([t_0-T^-, t_0+T^+]\cap I).$$
We consider the heat operator
$$\Box:=\partial_t -\Delta_{g(t)},$$
and the conjugate heat operator
$$\Box^*:=-\partial_t -\Delta_{g(t)}+R,$$
where $R$ denotes the scalar curvature at time $t$. 

For any $x,y\in M$ and $s,t\in I$ with $s\leq t$, we denote by $K(x,t;y,s)$ the heat kernel along the Ricci flow. That is, for fixed $(y,s)$, the function $K(\cdot,\cdot;y,s)$ is a heat kernel based at $(y,s)$:
$$\Box K(\cdot,\cdot;y,s)=0;~~~~ \lim_{t \searrow s}K(\cdot,t;y,s)=\delta_y,$$
and for fixed $(x,t)$, the function $K(x,t;\cdot,\cdot)$ is a conjugate heat kernel based at $(x,t)$:
$$\Box^* K(x,t;\cdot,\cdot)=0;~~~~ \lim_{s \nearrow s}K(x,t;\cdot,s)=\delta_x.$$

\begin{definition} 
For $(x, t)\in M\times I$ and $s\in I$ with $s\leq t$, we denote by $\nu_{x, t; s}$ the conjugate heat kernel measure based at $(x, t)$ as follows:
$$d\nu_{x, t; s}:=K(x, t; \cdot, \cdot) dg(t)=: (4\pi \tau)^{-n/2} e^{-f} dg(t),$$
where $\tau = t_0 -t$ and $f\in C^{\infty}(M\times (I\cap (\infty, t)))$ is called the potential.
\end{definition}

Basically, in Bamler's viewpoint, instead of considering a point $(x, t)$ in spcaetime, we shall consider its corresponding conjugate heat kernel measure $d\nu_{x, t; s}$ as defined above. Then Bamler used the $W_1$-Wasserstein distance to define distance between the conjugate heat kernel measures based at different points. If $\mu_1$ and $\mu_2$ denotes two probability measures
on a complete manifold $M$ and $g$ is a Riemannian metric on $M$, then we define the $W_1$-distance between $\mu_1$ and $\mu_2$ by
$$d_{W_1}^g(\mu_1, \mu_2):=\sup_{f}\left(\int_{M}fd\mu_1-\int_{M}fd\mu_2\right),$$
where the supremum is taken over all bounded, $1$-Lipschitz function $f:M\to \mathbb{R}$.

Bamler also used the important notion called variance, which in some sense is the $L^2$-norm of the probability measures on a Riemannian manifold. The definition is the following
$$\var (\mu_1, \mu_2):=\int_M\int_Md^2(\mu_1, \mu_2)d\mu_1(x_1)d\mu_2(x_2).$$

If $(M, (g(t))_{t\in I})$ is a Ricci flow and if there is chance of confusion, then we will also write
$\var_t$ for the variance with respect to the metric $g(t)$. We have the following basic relation between the variance and the $W_1$-distance:
\begin{equation}\label{Equ: relation of W1 and variance}
d^g_{W_1}(\mu_1, \mu_2)\leq \sqrt{\var (\mu_1, \mu_2)},
\end{equation}
for any two probability measures.

Now we can define the $H_n$-center of a given base point in Ricci flow.
\begin{definition} 
A point $(z, t)\in M\times I$ is called an $H_n$-center of a point $(x_0, t_0)\in M\times I$ if $t\leq t_0$ and
$$\var_t(\delta_z, \nu_{x_0, t_0; t})\leq H_n(t_0-t).$$
\end{definition}
Given $(x_0, t_0)\in M\times I$ and $t\leq t_0$, there always exists at least one $z\in M$ such that $(z, t)\in M\times I$ is an $H_n$-center of $(x_0, t_0)$, (see \cite[Proposition 3.12]{Bam20a}),and for such $H_n$-center, using inequality (\ref{Equ: relation of W1 and variance}), we always have
$$d^g_{W_1}(\delta_z, \nu_{x_0, t_0; t})\leq \sqrt{\var (\delta_z, \nu_{x_0, t_0; t})}\leq \sqrt{H_n(t_0-t)}.$$

Next, we come to define the so-called Nash entropy which was introduced by Hein-Naber \cite{HN}. Let $(M, g)$ be a closed Riemannian manifold of dimension $n$, $\tau>0$ and $d\nu=(4\pi \tau)^{-n/2} e^{-f} dg$ is a probability measure on $M$, where $f\in C^\infty(M)$, then the Nash entropy is defined by
$$ \cN[g, f, \tau] = \int_M  f d\nu-\frac{n}{2}.$$
Then we can define
\begin{definition} 
Consider a conjugate heat kernel measure $d\nu_{x_0, t_0; t}:=K(x_0, t_0; \cdot, t) dg(t)= (4\pi \tau)^{-n/2} e^{-f_t} dg(t),$ based at some point $(x_0, t_0)\in M\times I$, where $\tau=t_0-t$, the pointed Nash entropy based at $(x_0, t_0)$ is defined as 
$$\cN_{x_0, t_0}(\tau):=\cN[g_{t_0-\tau}, f_{t_0-\tau}, \tau].$$
We set $\cN_{x_0, t_0}(0)=0$. For $s<t_0$, $s\in I$, we will write
$$\cN^*_s(x_0, t_0):=\cN_{x_0, t_0}(t-s).$$
\end{definition}
The pointed Nash entropy $\cN_{x_0, t_0}(\tau)$ is non-increasing when $\tau\geq 0$ is increasing. For more basic properties of the pointed Nash entropy, We refer the readers to \cite[Proposition 5.2]{Bam20a}. (See also Hein-Naber's work \cite{HN}.)

The following lemma is proved in \cite[Theorem 5.9, Corollary 5.11]{Bam20a}.
\begin{lemma} \label{grad} If $R(\cdot, s) \geq R_{min}$ for some $s\in I$, then on $M \times I \cap \left((s, \infty)\right)$,
\begin{equation}\label{Equ: gradient estimate of Nash entropy}
|\nabla \cN^*_s| \leq \left(\frac{n}{2(t-s)} + | R_{min} |\right)^{1/2}.
\end{equation}
Furthermore, if $s< t^* \leq \min{t_1, t_2}$ and $s, t_1, t_2\in I$, then for any $x_1, x_2\in M$,
\begin{equation}\label{Equ: Harnack estimate of Nash entropy}
\begin{split}
&\cN^*_s(x_1, t_1) - \cN^*_s(x_2, t_2)  \\
\leq & \left( \frac{n}{2(t^* -s)} + |R_{min} |\right)^{1/2} d_{W_1}^{g(t^*)}( \nu_{x_1, t_1}(t^*), \nu_{x_2, t_2}(t^*)) + \frac{n}{2} \log \left(\frac{t_2-s}{t^*-s}\right).\\
\end{split}
\end{equation}
\end{lemma}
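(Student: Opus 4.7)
The plan is to establish a lower bound $\cN_{x,0}(r^2) \geq -C(n,A)$ on the pointed Nash entropy at $(x,0)$ and then combine it with the scalar curvature bound $R \leq r^{-2}$ on $B_{g(0)}(x,r)$ via the standard implication ``Nash entropy lower bound $+$ scalar curvature upper bound $\Rightarrow$ volume lower bound'' from Bamler's framework \cite{Bam20a} (cf.~Hein-Naber \cite{HN}) to conclude (\ref{Equ: nlc2}). The real work is the Nash entropy lower bound at $(x,0)$, which I would obtain by traveling backwards from $(x,0)$ down to the non-collapsed initial slice $\{-r_0^2\}$ along two successive legs: a spatial leg from $(x,0)$ to $(x_0,0)$, and a worldline leg from $(x_0,0)$ to $(x_0,-r_0^2)$, using the Harnack-type estimate (\ref{Equ: Harnack estimate of Nash entropy}) in each.

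For the spatial leg, $d_{g(0)}(x,x_0) \leq A r_0$ together with Bamler's $W_1$-contraction for conjugate heat kernels under Ricci flow yields $d_{W_1}^{g(t^*)}(\nu_{x,0}(t^*),\nu_{x_0,0}(t^*)) \leq A r_0$, so a direct application of (\ref{Equ: Harnack estimate of Nash entropy}) with $x_1 = x$, $x_2 = x_0$ and $t_1 = t_2 = 0$ reduces the problem to a Nash entropy lower bound at $(x_0,0)$ at a scale $\tau$ comparable to $r_0^2$. Monotonicity of $\cN_{x,0}$ in $\tau$ then transfers this to $\cN_{x,0}(r^2) \geq -C(n,A)$, since $r \leq r_0$.

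For the worldline leg I would apply (\ref{Equ: Harnack estimate of Nash entropy}) with $x_1 = x_2 = x_0$, $t_1 = 0$, $t_2 = -r_0^2$, and a past base time $t^* \leq -r_0^2$ (this is the reason why the flow is assumed on the extended interval $[-2r_0^2, 0]$). The only non-routine ingredient is an effective upper bound on
\begin{equation*}
d_{W_1}^{g(t^*)}\bigl(\nu_{x_0,0}(t^*),\, \nu_{x_0,-r_0^2}(t^*)\bigr).
\end{equation*}
This is the heart of the proof: the worldline scalar curvature bound $R(x_0,t) \leq A/|t|$, inserted into Perelman's differential Harnack for the conjugate heat kernel (equivalently, into the $\ell$-length of the constant worldline curve, which is $\leq A$), produces a pointwise upper bound on $K(x_0,0;x_0,t)$ which, together with the $H_n$-center characterization of $\nu_{x_0,0}(t^*)$ and the $W_1$-contraction of Ricci flow, yields $d_{W_1}^{g(t^*)}(\nu_{x_0,0}(t^*),\nu_{x_0,-r_0^2}(t^*)) \leq C(n,A)\,r_0$. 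Plugging back into the Harnack inequality reduces the worldline leg to a Nash entropy lower bound at $(x_0,-r_0^2)$ at a scale comparable to $r_0^2$, which in turn follows from the initial-slice volume non-collapsing hypothesis (2) by standard heat-kernel arguments.

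The main obstacle is the worldline $W_1$-estimate in the previous paragraph: all prior refinements of Theorem \ref{nlc1} (\cite{Zh}, \cite{W}, \cite{TiZ}, \cite{JS}) assumed a Ricci or full curvature bound on a parabolic neighborhood of $x_0$ and could therefore control the comparison of conjugate heat kernel measures along the worldline through metric-distortion estimates for $g(t)$; replacing this with a pointwise scalar curvature bound on the worldline itself forces the argument to pass through heat-kernel upper bounds for $K(x_0,0;\cdot,t)$ along the worldline, which is the novel contribution of the note.
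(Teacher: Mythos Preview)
Your proposal does not address the stated lemma at all. Lemma~\ref{grad} is the gradient/Harnack estimate for the pointed Nash entropy, which the paper does not prove but simply quotes from Bamler \cite[Theorem~5.9, Corollary~5.11]{Bam20a}. What you have written is a proof sketch for Theorem~\ref{nlc2}, the improved no-local-collapsing theorem; indeed you repeatedly \emph{apply} (\ref{Equ: Harnack estimate of Nash entropy}) as a black box rather than deriving it. As a proof of Lemma~\ref{grad} the proposal is therefore vacuous.

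Read instead as a sketch for Theorem~\ref{nlc2}, your outline matches the paper's argument in structure (spatial leg via the gradient estimate, worldline leg via the $\mathcal{L}$-length of the constant curve and Proposition~\ref{Prop: estimate of reduced length}, volume non-inflation at the initial slice, then Lemma~\ref{noncol}), but there is one genuine gap. You assert that the worldline $\mathcal{L}$-bound ``produces a pointwise upper bound on $K(x_0,0;x_0,t)$'' and hence $d_{W_1}^{g(t^*)}(\nu_{x_0,0}(t^*),\nu_{x_0,-r_0^2}(t^*))\leq C(n,A)\,r_0$. Both claims are wrong as stated: Perelman's estimate $K\geq(4\pi\tau)^{-n/2}e^{-\ell}$ is a \emph{lower} bound, and when it is compared with Bamler's Gaussian \emph{upper} bound (Lemma~\ref{hkub}), which carries a factor $\exp(-\cN_{x_0,0}(\tau))$, one obtains only (cf.~Proposition~\ref{Prop: estimate of reduced length})
\[
d^{g(-r_0^2)}_{W_1}\bigl(\delta_{x_0},\,\nu_{x_0,0;-r_0^2}\bigr)\;\leq\; C\bigl(1+A-\cN_{x_0,0}(r_0^2)\bigr)^{1/2}r_0,
\]
with the unknown Nash entropy still on the right-hand side. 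The paper then feeds this into (\ref{Equ: Harnack estimate of Nash entropy}) and uses the elementary inequality $ab\leq \epsilon a^2+\epsilon^{-1}b^2$ to absorb the resulting term $C\sqrt{-\cN^*_{-2}(x_0,0)}$ into the left-hand side (see (\ref{Equ: estimate of Nash entropy 1})--(\ref{Equ: estimate of Nash entropy 2})). Without this self-improving absorption step the argument does not close, so your claimed bound $d_{W_1}\leq C(n,A)r_0$ skips precisely the non-routine part of the proof.
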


The following volume non-collapsing estimate is proved in \cite{Bam20a} as a generalization of Perelman's $\kappa$-non-collapsing theorem.

\begin{lemma} \label{noncol} Let $(M, (g(t))_{t\in [-r^2,  0]})$ be a solution of the Ricci flow. If
$$R \leq r^{-2}, ~ on ~ B_{g(0)}(x, r)\times [-r^2, 0], $$
then
\begin{equation}\label{Equ: volume lower bound by Nash entropy}
\vol_{g(0)} (B_{g(0)}(x, r)) \geq c \exp (\cN^*_{-r^2}(x, 0))r^n.
\end{equation}
\end{lemma}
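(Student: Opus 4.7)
The plan is to establish the Nash entropy lower bound $\cN^*_{-r^2}(x,0)\geq -C(n,A)$ and then invoke Lemma \ref{noncol} (in the improved form, due to Bamler, that requires only the final-time scalar bound on $B_{g(0)}(x,r)$) to obtain the volume estimate \eqref{Equ: nlc2}. Since $\cN_{x,0}(\tau)$ is nonincreasing in $\tau$ and $r\leq r_0$, this reduces to $\cN^*_{-r_0^2}(x,0)\geq -C(n,A)$. Choosing the auxiliary look-back time $s=-\tfrac{3}{2}r_0^2$ (so that $|R_{\min}(s)|\leq\tfrac{n}{r_0^2}$ by Perelman's minimum scalar curvature estimate applied to the flow on $[-2r_0^2,0]$), I iterate the Harnack inequality \eqref{Equ: Harnack estimate of Nash entropy} along three segments: (a) an initial bound $\cN^*_{s}(x_0,-r_0^2)\geq -C_0(n,A)$ from hypothesis (2) and the flow's existence on $[-2r_0^2,-r_0^2]$, via the Nash-entropy analog of Perelman's $\kappa$-non-collapsing; (b) a temporal transfer along the worldline of $x_0$ from $(x_0,-r_0^2)$ to $(x_0,0)$; and (c) a spatial transfer at time $0$ from $(x_0,0)$ to $(x,0)$. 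Segment (c) is standard: since $d_{g(0)}(x_0,x)\leq Ar_0$ and the measures $\nu_{x_0,0;t^*},\nu_{x,0;t^*}$ concentrate on $\delta_{x_0},\delta_x$ as $t^*\to 0^-$, one has $d_{W_1}^{g(t^*)}(\nu_{x,0;t^*},\nu_{x_0,0;t^*})\leq C(n,A)r_0$, giving $\cN^*_{s}(x,0)\geq\cN^*_{s}(x_0,0)-C(n,A)$.

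The key new step is the temporal transfer (b). Applying \eqref{Equ: Harnack estimate of Nash entropy} with $(x_1,t_1)=(x_0,-r_0^2)$, $(x_2,t_2)=(x_0,0)$, $s=-\tfrac{3}{2}r_0^2$, and $t^*=-r_0^2$, together with $\nu_{x_0,-r_0^2;-r_0^2}=\delta_{x_0}$,
$$\cN^*_{s}(x_0,0)\;\geq\;\cN^*_{s}(x_0,-r_0^2)\;-\;\tfrac{\sqrt{2n}}{r_0}\cdot d_{W_1}^{g(-r_0^2)}\!\bigl(\delta_{x_0},\,\nu_{x_0,0;-r_0^2}\bigr)\;-\;\tfrac{n}{2}\log 3.$$
Thus the task reduces to bounding the $W_1$-distance between two conjugate heat kernel measures at $(x_0,0)$ and $(x_0,-r_0^2)$ on the same worldline of $x_0$, evaluated at the common time $-r_0^2$; since $d_{W_1}^{g(-r_0^2)}(\delta_{x_0},\nu_{x_0,0;-r_0^2})=\int d_{g(-r_0^2)}(x_0,y)\,d\nu_{x_0,0;-r_0^2}(y)$, this is the first moment of $d_{g(-r_0^2)}(x_0,\cdot)$ against the conjugate heat kernel measure.

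The principal obstacle is to establish the first-moment bound $\int d_{g(-r_0^2)}(x_0,y)\,d\nu_{x_0,0;-r_0^2}(y)\leq C(n,A)r_0$ using hypothesis (1). My plan is to consider $F(t):=d_{W_1}^{g(t)}(\delta_{x_0},\nu_{x_0,0;t})=\int d_{g(t)}(x_0,y)\,d\nu_{x_0,0;t}(y)$ for $t\in[-r_0^2,0]$, note $F(0)=0$, and bound $F(-r_0^2)$ by integrating $|F'(t)|$ from $0$ backward. The derivative $F'(t)$ has two singular contributions as $t\to 0^-$: one from the conjugate heat evolution of $\nu_{x_0,0;t}$ via $\partial_tK=-\Delta_yK+RK$ (controlled pointwise by Bamler's sharp Gaussian heat kernel upper bound, which concentrates the measure in a small neighborhood of $x_0$ at small $|t|$), and one from the Ricci-flow-induced motion of $y\mapsto d_{g(t)}(x_0,y)$ (controlled at $x_0$ itself by $R(x_0,t)\leq A/|t|$ via a Hamilton-type distance distortion inequality). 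Both contributions scale like $|t|^{-1/2}$ near $t=0$; their integrals over $[-r_0^2,0]$ combine to give $F(-r_0^2)\leq C(n,A)r_0$. The technical heart of the proof is upgrading the pointwise scalar curvature bound along the single worldline $\{x_0\}\times[-r_0^2,0]$ into an integrated control on $F'(t)$ via Bamler's heat kernel concentration.
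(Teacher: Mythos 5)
Your proposal does not address the statement it is supposed to prove. The statement is Lemma \ref{noncol} itself: the volume lower bound $\vol_{g(0)}(B_{g(0)}(x,r)) \geq c\,\exp(\cN^*_{-r^2}(x,0))\,r^n$ under the hypothesis $R\le r^{-2}$ on $B_{g(0)}(x,r)\times[-r^2,0]$. Your very first sentence invokes Lemma \ref{noncol} as a known result ``due to Bamler'' and then combines it with a Nash entropy lower bound to deduce \eqref{Equ: nlc2}; that is an outline of the proof of Theorem \ref{nlc2}, not of Lemma \ref{noncol}. Relative to the assigned statement the argument is circular: the one thing that has to be established --- that a lower bound on the pointed Nash entropy, together with a scalar curvature upper bound on the parabolic neighborhood, forces the volume ratio at the final time to be bounded below --- is exactly what you assume at the outset. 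Nothing in your sketch supplies that implication.

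For the record, the paper does not prove Lemma \ref{noncol} either; it quotes it from Bamler's entropy and heat kernel paper as a generalization of Perelman's $\kappa$-non-collapsing theorem. A genuine proof would have to connect $\cN^*_{-r^2}(x,0)$ to $\vol_{g(0)}(B_{g(0)}(x,r))$, e.g.\ via upper and lower Gaussian bounds and concentration for the conjugate heat kernel based at $(x,0)$, or via a localized log-Sobolev/$\mathcal{W}$-functional argument in the spirit of Perelman's original non-collapsing proof; none of that machinery appears in your write-up. As an aside, the content you do present --- the temporal transfer along the worldline of $x_0$ exploiting $R(x_0,t)\le A/|t|$, followed by the spatial transfer at time $0$ via the gradient bound on $\cN^*_s$ --- is essentially the strategy the paper uses for Theorem \ref{nlc2}, except that the paper obtains the worldline $W_1$-bound by playing Perelman's reduced-length lower bound for the heat kernel against Bamler's upper bound (its Proposition \ref{Prop: estimate of reduced length}) rather than by differentiating the first moment $F(t)$ in time; but this is a proof of a different statement than the one assigned.
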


The following volume non-inflation estimate is also proved in \cite{Bam20a}.

\begin{lemma} \label{noninfl} Let $(M, (g(t))_{t\in [-r^2,  0]})$ be a solution of the Ricci flow. If
$$R\geq -r^{-2}    $$
on $M\times [-r^2, 0]$, then for any $A\geq 1$, there exists $C_0=C_0(n), C=C(n)>0$ such that
\begin{equation}\label{Equ: volume upper bound by Nash entropy}
\vol_{g(0)}(x, Ar) \leq C \exp(\cN^*_{-r^2}(x, 0)+C_0A^2)r^n.
\end{equation}
\end{lemma}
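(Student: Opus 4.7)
The plan is to apply the volume non-collapsing estimate (Lemma \ref{noncol}), which reduces Theorem \ref{nlc2} to the lower bound $\cN^*_{-r^2}(x, 0) \geq -C(n, A)$. By monotonicity of $\tau \mapsto \cN_{x, 0}(\tau)$ and $r \leq r_0$, this follows from a corresponding lower bound on $\cN^*_{-r_0^2}(x, 0)$. To pass from $(x,0)$ to the basepoint $(x_0,0)$, I would apply the Harnack-type inequality (\ref{Equ: Harnack estimate of Nash entropy}) with $(x_1,t_1)=(x_0,0)$, $(x_2,t_2)=(x,0)$, $s=-r_0^2$, and $t^* = 0$: the log term vanishes, the $W_1$-distance collapses to $d_{g(0)}(x_0, x) \leq A r_0$, and the standard maximum-principle ODE $R'_{\min} \geq \tfrac{2}{n} R_{\min}^2$ applied to the Ricci flow starting at $t=-2r_0^2$ yields $|R_{\min}(-r_0^2)| \leq n/(2r_0^2)$. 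This gives
\[
\cN^*_{-r_0^2}(x, 0) \geq \cN^*_{-r_0^2}(x_0, 0) - A\sqrt{n}.
\]

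It then remains to bound $\cN^*_{-r_0^2}(x_0, 0)$ from below using hypotheses (1) and (2). I would first apply Lemma \ref{noninfl} to the flow $g$ restricted to a subinterval $[-(1+c_n)r_0^2, -r_0^2]$ (where, for a small $c_n>0$ depending only on $n$, the ODE bound above provides the required scalar curvature lower bound to invoke the lemma at a rescaled radius $\sim r_0/\sqrt{n}$). Combining the resulting volume upper bound with hypothesis (2) produces
\[
\cN_{x_0, -r_0^2}(c_n r_0^2) \geq -C(n, A).
\]
To transport this from basepoint $(x_0, -r_0^2)$ to basepoint $(x_0, 0)$, I apply (\ref{Equ: Harnack estimate of Nash entropy}) again with $(x_1,t_1)=(x_0,-r_0^2)$, $(x_2,t_2)=(x_0,0)$, $s=-(1+c_n)r_0^2$, and $t^*=-r_0^2$. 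Since $\nu_{x_0, -r_0^2}(t^*)=\delta_{x_0}$, the $W_1$-quantity that enters is
\[
d^{g(-r_0^2)}_{W_1}\!\bigl(\delta_{x_0}, \nu_{x_0, 0}(-r_0^2)\bigr),
\]
the $W_1$-distance between the two conjugate heat kernel measures based at distinct points on the worldline of $x_0$. A monotonicity step then upgrades the resulting bound for $\cN^*_{-(1+c_n)r_0^2}(x_0, 0)$ to one for $\cN^*_{-r_0^2}(x_0, 0)$.

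The main obstacle, and the new technical ingredient identified by the author, is the worldline $W_1$-estimate
\[
d^{g(-r_0^2)}_{W_1}\!\bigl(\delta_{x_0}, \nu_{x_0, 0}(-r_0^2)\bigr) \leq C(n, A)\, r_0,
\]
which must be derived from hypothesis (1), $R(x_0, t) \leq A/|t|$ along the worldline. The idea is to use Bamler's heat kernel upper bound on Ricci flow: the worldline scalar curvature control yields a bound on the pointed Nash entropy $\cN_{x_0, 0}(\tau)$ for $\tau \in (0, r_0^2]$, which via the Gaussian-type concentration of $\nu_{x_0, 0}(t)$ around its $H_n$-center $z$ gives $d^{g(-r_0^2)}_{W_1}(\delta_z, \nu_{x_0, 0}(-r_0^2)) \leq \sqrt{H_n}\, r_0$. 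The triangle inequality then reduces the estimate to controlling $d_{g(-r_0^2)}(x_0, z)$, which is in turn bounded by the same heat kernel considerations along the worldline. Combining this $W_1$-estimate with the two Harnack reductions above yields $\cN^*_{-r^2}(x, 0) \geq -C(n, A)$ and hence the theorem.
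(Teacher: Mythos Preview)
Your proposal sketches a proof of Theorem~\ref{nlc2}, not of Lemma~\ref{noninfl} (which the paper quotes from \cite{Bam20a} without proof); I compare it to the paper's proof of Theorem~\ref{nlc2}. Your overall architecture---reduce via Lemma~\ref{noncol} to a Nash-entropy lower bound at $(x,0)$, transport to $(x_0,0)$ by the gradient estimate, transport temporally to $(x_0,-r_0^2)$ via (\ref{Equ: Harnack estimate of Nash entropy}), and seed with Lemma~\ref{noninfl} plus hypothesis~(2)---matches the paper.

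The gap is in the worldline $W_1$-estimate. You claim the target is a uniform bound $d^{g(-r_0^2)}_{W_1}(\delta_{x_0}, \nu_{x_0,0;-r_0^2}) \leq C(n,A)\,r_0$, obtained because ``worldline scalar curvature control yields a bound on the pointed Nash entropy $\cN_{x_0,0}(\tau)$''; but a lower bound on $\cN_{x_0,0}$ is precisely what the whole argument is after, so this step is circular, and the remaining ``heat kernel considerations'' for $d_{g(-r_0^2)}(x_0,z)$ are left unspecified. The paper does \emph{not} prove a uniform $W_1$-bound. Its Proposition~\ref{Prop: estimate of reduced length} compares Perelman's lower heat-kernel bound $K\geq (4\pi\tau)^{-n/2}e^{-\ell}$ with Bamler's upper bound (Lemma~\ref{hkub}); hypothesis~(1) enters by bounding the $\mathcal{L}$-length of the static worldline $\tau\mapsto(x_0,-\tau)$ by $2A$, giving
\[
d^{g(-1)}_{W_1}\bigl(\delta_{x_0}, \nu_{x_0,0;-1}\bigr)\leq C\bigl(1-\cN_{x_0,0}(1)\bigr)^{1/2},
\]
which still carries the unknown entropy on the right. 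The step your sketch is missing is the absorption: inserting this into (\ref{Equ: Harnack estimate of Nash entropy}) yields an inequality of the form $|\cN^*_{-2}(x_0,-1)-\cN^*_{-2}(x_0,0)|\leq C(C-\cN^*_{-2}(x_0,0))^{1/2}+C$, and only after an AM--GM split (the $\tfrac{1}{100}$ trick in (\ref{Equ: estimate of Nash entropy 1})) does one solve for $\cN^*_{-2}(x_0,0)\geq -C$. Without the reduced-length lower bound on $K$ and this absorption argument, your scheme does not close.
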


The following heat kernel upper bound estimate is proved in \cite{Bam20a} (Theorem 7.2).

\begin{lemma} \label{hkub} Let $(M, (g(t))_{t\in I})$ be a solution of the Ricci flow. Suppose that $[s, t]\subset I$ and $R\geq R_{\min}$ on $M\times [s, t]$. Let $(z, s)\in M\times I$ be an $H_n$-center of $(x, t)\in M\times I$. Then for any $\varepsilon>0$ and $y\in M$, we have
\begin{equation}\label{Equ: Heat kernel upper bound 1}
K(x, t; y,s)\leq \frac{C(R_{\min}(t-s),\varepsilon)}{(t-s)^{n/2}}\exp\left(-\cN_{x,t}(t-s)\right)\exp\left(-\frac{d^2_s(z,y)}{C(\varepsilon)(t-s)}\right).
\end{equation}
\end{lemma}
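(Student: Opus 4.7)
The plan is to establish the lower bound $\cN^*_{-r^2}(x,0)\geq -C(n,A)$ on the pointed Nash entropy and invoke Lemma \ref{noncol} (or its standard refinement consistent with the scalar-curvature-only hypothesis (\ref{Equ: R bound on final time ball})) to deduce the volume non-collapsing. Since $r\leq r_0$, monotonicity of $\tau\mapsto \cN_{x,0}(\tau)$ reduces the task to $\cN^*_{-r_0^2}(x,0)\geq -C(n,A)$. Applying the Harnack estimate (Lemma \ref{grad}) to the pair $(x,0)$, $(x_0,0)$ at some intermediate time $t^*\in(-r_0^2,0)$ transfers this to a lower bound for $\cN^*_{-r_0^2}(x_0,0)$, the $W_1$-error being bounded by $C(n,A)\,r_0$ via $H_n$-centers of $\nu_{x,0}$, $\nu_{x_0,0}$ at $t^*$ together with the hypothesis $d_{g(0)}(x,x_0)\leq Ar_0$.

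I would next transfer the bound back along the worldline at $x_0$. By monotonicity $\cN^*_{-r_0^2}(x_0,0)\geq \cN^*_{-2r_0^2}(x_0,0)$, and Lemma \ref{grad} with $s=-2r_0^2$, $t^*=-r_0^2$, $x_1=x_2=x_0$, $t_1=-r_0^2$, $t_2=0$ (so $\nu_{x_0,-r_0^2}(t^*)=\delta_{x_0}$) yields
\begin{equation*}
\cN^*_{-2r_0^2}(x_0,-r_0^2) - \cN^*_{-2r_0^2}(x_0,0) \leq \frac{C}{r_0}\, d_{W_1}^{g(-r_0^2)}\bigl(\delta_{x_0},\nu_{x_0,0;-r_0^2}\bigr) + C.
\end{equation*}
The quantity $\cN^*_{-2r_0^2}(x_0,-r_0^2)$ is bounded below by $-C(n,A)$ via the volume non-inflation estimate (Lemma \ref{noninfl}) applied on $[-2r_0^2,-r_0^2]$, using the volume hypothesis $\vol_{g(-r_0^2)}(B(x_0,r_0))\geq A^{-1}r_0^n$; this is precisely the role of the extra time interval $[-2r_0^2,-r_0^2]$ in the theorem.

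The crux of the argument is then the worldline estimate
\begin{equation*}
d_{W_1}^{g(-r_0^2)}\bigl(\delta_{x_0},\nu_{x_0,0;-r_0^2}\bigr) \leq C(n,A)\, r_0. \qquad (*)
\end{equation*}
Let $(z,-r_0^2)$ be an $H_n$-center of $(x_0,0)$. Triangle inequality and the $H_n$-center variance bound give $d_{W_1}^{g(-r_0^2)}(\delta_{x_0},\nu_{x_0,0;-r_0^2}) \leq d_{g(-r_0^2)}(x_0,z) + \sqrt{H_n}\,r_0$, so it suffices to bound $d_{g(-r_0^2)}(x_0,z)$. Evaluating the heat kernel upper bound (Lemma \ref{hkub}) at $y=x_0$ and rearranging,
\begin{equation*}
d_{g(-r_0^2)}^2(x_0,z) \leq Cr_0^2\Bigl[\cN_{x_0,0}(r_0^2) - \log\bigl(r_0^n K(x_0,0;x_0,-r_0^2)\bigr) + C\Bigr].
\end{equation*}
The scalar-curvature hypothesis $R(x_0,t)\leq A/|t|$ on $\{x_0\}\times[-r_0^2,0)$ is then used twice: it yields an upper bound $\cN_{x_0,0}(r_0^2)\leq C(n,A)$ (via the $\tau$-derivative formula for the pointed Nash entropy, whose worldline piece is controlled by $R$ at the basepoint), and a lower bound $r_0^n K(x_0,0;x_0,-r_0^2)\geq c(n,A)$ (via a Harnack-type integration of the conjugate heat kernel along the worldline). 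Together with $\sqrt{H_n}\,r_0$, these give $(*)$.

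The main obstacle is extracting these two worldline bounds---an upper bound on $\cN_{x_0,0}(r_0^2)$ and a lower bound on the diagonal heat kernel $r_0^n K(x_0,0;x_0,-r_0^2)$---from only the pointwise scalar-curvature estimate on $\{x_0\}\times[-r_0^2,0)$, rather than the full Riemann bound on a parabolic neighborhood used in Perelman's original proof. This is precisely the refinement the paper aims for, and once $(*)$ is secured, the chain of Harnack and monotonicity arguments above assembles to give the desired lower bound on $\cN^*_{-r^2}(x,0)$.
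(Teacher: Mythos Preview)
Your proposal does not address the stated Lemma~\ref{hkub} at all; that lemma is quoted from Bamler \cite[Theorem~7.2]{Bam20a} and not proved in the paper. What you have written is a proof sketch of the main Theorem~\ref{nlc2}. Comparing it to the paper's proof of that theorem, the overall architecture agrees: bound $\cN^*_{-2r_0^2}(x_0,-r_0^2)$ below via Lemma~\ref{noninfl} and the volume hypothesis, transfer this to $\cN^*_{-2r_0^2}(x_0,0)$ via Lemma~\ref{grad}, then to $(x,0)$ via the gradient estimate and monotonicity, and conclude with Lemma~\ref{noncol}.

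The genuine gap is in your derivation of $(*)$. Rearranging Lemma~\ref{hkub} correctly gives
\[
d_{g(-r_0^2)}^2(x_0,z)\ \leq\ Cr_0^2\Bigl[\,-\cN_{x_0,0}(r_0^2)\ -\ \log\bigl(r_0^n K(x_0,0;x_0,-r_0^2)\bigr)\ +\ C\Bigr],
\]
with $-\cN$, not $+\cN$ as you wrote. Since $\cN\leq 0$ always, the ``upper bound $\cN_{x_0,0}(r_0^2)\leq C(n,A)$'' you claim is trivially true but useless; what is actually needed to close $(*)$ is a bound on $-\cN_{x_0,0}(r_0^2)$, i.e.\ a \emph{lower} bound on the Nash entropy, which is precisely what the whole argument is trying to establish. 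So as written the step is circular. Your proposed justification, ``via the $\tau$-derivative formula for the pointed Nash entropy, whose worldline piece is controlled by $R$ at the basepoint'', is also incorrect: the $\tau$-derivative of $\cN_{x_0,0}$ is a global integral against the conjugate heat kernel measure, not a quantity determined by $R$ at a single point.

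The paper's resolution is exactly \emph{not} to seek an absolute bound $(*)$. The heat-kernel lower bound via the $\cL$-length of the worldline (your ``Harnack-type integration'', which does work and yields $r_0^nK(x_0,0;x_0,-r_0^2)\geq c(A)$ from $R(x_0,t)\leq A/|t|$) is combined with Lemma~\ref{hkub} to produce
\[
d^{g(-r_0^2)}_{W_1}\bigl(\delta_{x_0},\nu_{x_0,0;-r_0^2}\bigr)\leq C\bigl(1+A-\cN_{x_0,0}(r_0^2)\bigr)^{1/2}r_0
\]
(this is Proposition~\ref{Prop: estimate of reduced length}), retaining the $-\cN$ term. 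Feeding this into Lemma~\ref{grad} and using monotonicity $-\cN_{x_0,0}(1)\leq -\cN^*_{-2}(x_0,0)$ gives a self-referential inequality
\[
\cN^*_{-2}(x_0,-1)-\cN^*_{-2}(x_0,0)\ \leq\ C\bigl(C-\cN^*_{-2}(x_0,0)\bigr)^{1/2}+C,
\]
which is then absorbed via the elementary $C\sqrt{X}\leq 100C^2+\tfrac{1}{100}X$ to obtain $\tfrac{99}{100}\cN^*_{-2}(x_0,0)\geq \cN^*_{-2}(x_0,-1)-C$. This absorption/self-improvement step is the missing idea in your sketch.
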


For more results on the entropy and heat kernel bounds on Ricci flow, we refer the readers to \cite{Bam20a}.


\section{Proof of the improved no-local-collapsing theorem}

In this section, we come to prove Theorem \ref{nlc2}. First, we need the following proposition.
\begin{proposition}\label{Prop: estimate of reduced length}
For any $T>0$, consider a smooth Ricci flow $(M,(g(t))_{t\in (-T,0)})$ on compact $n$-dimensional manifold, and choose $(s,t)\subset (-T,0)$ with $s+T>\varepsilon>0$. Consider a $C^1$ spacetime curve $\gamma:(0, t-s)\to M\times (-T, 0)$ with $\gamma (\tau)\in M\times \left\{t-\tau\right\}$ between points $\gamma(0)=x$ and $\gamma(t-s)=y$. Then we have
\begin{equation}\label{Equ: estimate of reduced length by nash entropy}
d^{g(s)}_{W_1}\left(\delta_{y,s}, \nu_{x,t;s}\right)\leq C(\varepsilon)\left(1+\frac{\mathcal{L}(\gamma)}{2\sqrt{t-s}}-\cN_{x,t}(t-s)\right)^{\frac{1}{2}}\sqrt{t-s}.
\end{equation}
\end{proposition}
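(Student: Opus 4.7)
The plan is to convert the desired $W_1$ bound into a distance estimate against an $H_n$-center and then deduce that distance estimate by sandwiching the heat kernel $K(x,t;y,s)$ between a Perelman-type lower bound coming from the $\mathcal{L}$-length of $\gamma$ and the Gaussian upper bound of Lemma \ref{hkub}. Pick $z \in M$ such that $(z,s)$ is an $H_n$-center of $(x,t)$, which exists by \cite[Proposition 3.12]{Bam20a}. The triangle inequality for $d_{W_1}^{g(s)}$ combined with (\ref{Equ: relation of W1 and variance}) gives
\begin{equation*}
d^{g(s)}_{W_1}(\delta_{y,s}, \nu_{x,t;s}) \leq d_{g(s)}(y, z) + d^{g(s)}_{W_1}(\delta_{z,s}, \nu_{x,t;s}) \leq d_{g(s)}(y,z) + \sqrt{H_n(t-s)},
\end{equation*}
so the task reduces to bounding $d_{g(s)}(y,z)$ by the claimed right-hand side of (\ref{Equ: estimate of reduced length by nash entropy}); the residual $\sqrt{H_n(t-s)}$ term will be harmless after adjusting constants.

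For the two heat kernel estimates at $(y,s)$: on one hand, Perelman's maximum principle applied to the subsolution $(4\pi\tau)^{-n/2}e^{-\ell(\cdot,\tau)}$ of the conjugate heat equation yields the pointwise lower bound
$$
K(x,t;y,s) \geq (4\pi(t-s))^{-n/2}\exp\!\left(-\ell(y,t-s)\right) \geq (4\pi(t-s))^{-n/2}\exp\!\left(-\frac{\mathcal{L}(\gamma)}{2\sqrt{t-s}}\right),
$$
where $\ell$ is Perelman's reduced length based at $(x,t)$ and $\gamma$ enters as an admissible competitor in the definition of $\ell$. On the other hand, the standard scalar curvature lower bound $R \geq -n/(2(t'+T))$ along a smooth Ricci flow on a compact manifold combined with $s+T>\varepsilon$ yields $R \geq -n/(2\varepsilon)$ on $M \times [s,t]$, so Lemma \ref{hkub} provides
$$
K(x,t;y,s) \leq \frac{C(\varepsilon)}{(t-s)^{n/2}}\exp\!\left(-\cN_{x,t}(t-s)\right)\exp\!\left(-\frac{d^2_{g(s)}(z,y)}{C(\varepsilon)(t-s)}\right).
$$
Taking logarithms of these two estimates and subtracting the first from the second produces
$$
\frac{d^2_{g(s)}(z,y)}{C(\varepsilon)(t-s)} \leq C'(\varepsilon) + \frac{\mathcal{L}(\gamma)}{2\sqrt{t-s}} - \cN_{x,t}(t-s),
$$
which on taking square roots and rearranging yields the required bound $d_{g(s)}(y,z) \leq C(\varepsilon)\sqrt{t-s}\bigl(1+\mathcal{L}(\gamma)/(2\sqrt{t-s})-\cN_{x,t}(t-s)\bigr)^{1/2}$, and together with the first paragraph closes the argument.

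The main technical obstacle is the careful bookkeeping of the constant in Lemma \ref{hkub}: as written it is $C(R_{\min}(t-s),\varepsilon)$, and one must control the dependence on the product $R_{\min}(t-s)$ by the parameter $\varepsilon$ alone, which goes through cleanly when $(t-s)/\varepsilon$ is bounded, and otherwise requires a splitting of the time interval into pieces of size $\lesssim \varepsilon$. A smaller point is verifying that the argument of the square root in (\ref{Equ: estimate of reduced length by nash entropy}) is nonnegative; the non-positivity of the Nash entropy together with the trivial lower bound $\mathcal{L}(\gamma)/(2\sqrt{t-s}) \geq R_{\min}(t-s)/3$ on the $\mathcal{L}$-length makes this automatic once the resulting constant is absorbed into $C(\varepsilon)$.
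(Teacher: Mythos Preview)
Your proposal is correct and follows essentially the same approach as the paper: both sandwich $K(x,t;y,s)$ between Perelman's reduced-length lower bound and Bamler's Gaussian upper bound centered at an $H_n$-center $(z,s)$, then convert the resulting bound on $d_{g(s)}(y,z)$ into the desired $W_1$-estimate via the triangle inequality and the $H_n$-center property. The only cosmetic difference is the order of operations---the paper uses the elementary inequality $(a-b)^2\geq a^2/2-4b^2$ to replace $d_s(y,z)$ by $d^{g(s)}_{W_1}(\delta_y,\nu_{x,t;s})$ inside the Gaussian exponent before comparing, whereas you first extract the bound on $d_s(y,z)$ and then add $\sqrt{H_n(t-s)}$; your remarks on the constant bookkeeping apply equally to the paper's version.
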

\begin{proof}
This is obtain by Bamler in \cite[Lemma 21.2]{Bam20c}, we include the proof for the convenience of the readers.

First, from the definition of the reduced distance, we have
$$\ell_{(x,t)}(y,s)\leq \frac{\mathcal{L}(\gamma)}{2\sqrt{t-s}}.$$
Hence from the estimate of Perelman \cite[Corollary 9.5]{Per1}, we have the following lower bound on the heat kernel
\begin{equation}\label{Equ: Heat kernel lower bound}
\begin{split}
K(x, t; y,s)\geq & \frac{1}{(4\pi(t-s))^{n/2}}\exp\left(-\ell_{(x,t)}(y,s)\right)\\
\geq& \frac{1}{(4\pi(t-s))^{n/2}}\exp\left(-\frac{\mathcal{L}(\gamma)}{2\sqrt{t-s}}\right).\\
\end{split}
\end{equation}
But, since $s+T>\varepsilon$, we can apply the heat kernel upper bound estimate of Bamler, say Lemma \ref{hkub}, to obtain
\begin{equation}\label{Equ: Heat kernel upper bound 2}
K(x, t; y,s)\leq \frac{C(\varepsilon)}{(t-s)^{n/2}}\exp\left(-\cN_{x,t}(t-s)\right)\exp\left(-\frac{d^2_s(z,y)}{C(\varepsilon)(t-s)}\right).
\end{equation}
where $(z, s)$ is an $H_n$-center of $(x, t)$. Hence by the triangle inequality and (\ref{Equ: relation of W1 and variance}), we have
\begin{equation}\label{Equ: estimate of W_1 distance 1}
\begin{split}
&\left|d^{g(s)}_{W_1}\left(\delta_{y,s}, \nu_{x,t;s}\right)-d_s\left(y, z\right)\right|\\
=&\left|d^{g(s)}_{W_1}\left(\delta_{y,s}, \nu_{x,t;s}\right)-d^{g(s)}_{W_1}\left(\delta_{y,s}, \delta_{z,s}\right)\right|\\
\leq& d^{g(s)}_{W_1}\left(\delta_{z,s}, \nu_{x,t;s}\right)\leq \sqrt{\var\left(\delta_{z,s}, \nu_{x,t;s}\right)}\\
\leq& \sqrt{H_n(t-s)}.\\
\end{split}
\end{equation}
Hence, using the trivial inequality $(a-b)^2\geq \frac{a^2}{2}-4b^2$ for all $a,b\in \mathbb{R}$, we obtain
$$H_n(t-s)\geq \frac{1}{2}d^{g(s)}_{W_1}\left(\delta_{y,s}, \nu_{x,t;s}\right)^2-4d^2_s\left(y, z\right).$$
Plugging this into inequality (\ref{Equ: Heat kernel upper bound 2}), we obtain
\begin{equation}\label{Equ: Heat kernel upper bound 3}
\begin{split}
K(x, t; y,s)\leq &\frac{C(\varepsilon)}{(t-s)^{n/2}}\exp\left(-\cN_{x,t}(t-s)\right)\exp\left(-\frac{d^{g(s)}_{W_1}\left(\delta_{y,s}, \nu_{x,t;s}\right)^2-C(n)(t-s)}{C(\varepsilon)(t-s)}\right)\\
\leq &\frac{C(\varepsilon)}{(t-s)^{n/2}}\exp\left(-\cN_{x,t}(t-s)-\frac{d^{g(s)}_{W_1}\left(\delta_{y,s}, \nu_{x,t;s}\right)^2}{C(\varepsilon)(t-s)}\right).\\
\end{split}
\end{equation}
Combining (\ref{Equ: Heat kernel upper bound 3}) with (\ref{Equ: Heat kernel lower bound}),we obtain
$$\frac{1}{(4\pi(t-s))^{n/2}}\exp\left(-\frac{\mathcal{L}(\gamma)}{2\sqrt{t-s}}\right)\leq \frac{C(\varepsilon)}{(t-s)^{n/2}}\exp\left(-\cN_{x,t}(t-s)-\frac{d^{g(s)}_{W_1}\left(\delta_{y,s}, \nu_{x,t;s}\right)^2}{C(\varepsilon)(t-s)}\right).$$
Rearranging this inequality, we obtain
$$d^{g(s)}_{W_1}\left(\delta_{y,s}, \nu_{x,t;s}\right)\leq C(\varepsilon)\left(1+\frac{\mathcal{L}(\gamma)}{2\sqrt{t-s}}-\cN_{x,t}(t-s)\right)^{\frac{1}{2}}\sqrt{t-s},$$
which finishes the proof of the proposition.
\end{proof}

Now we can prove Theorem \ref{nlc2}.
\begin{proof}[Proof of Theorem \ref{nlc2}]

After parabolic rescaling, we may assume without loss of generality that $r_0=1$, and then $r\leq 1$.

Let $\gamma: [0, 1]\to M\times [-1, 0]$ be the spacetime curve defined by:
$$\gamma(\tau)=(x_0, -\tau),~~~ \tau\in[0, 1],$$
which is what we usually called the worldline. Then from our assumption, we have
$$R(\gamma(\tau), -\tau)\leq \frac{A}{\tau},~~ \forall \tau\in (0, 1],$$
and hence we can compute
\begin{equation}\label{Equ: estimate of reduced length 1}
\begin{split}
\mathcal{L}(\gamma)=& \int_{0}^{1}\sqrt{\tau}\left(|\gamma'(\tau)|^2_{-\tau}+R(\gamma(\tau), -\tau)\right)d\tau\\
=& \int_{0}^{1}\sqrt{\tau}R(\gamma(\tau), -\tau)d\tau\\
\leq& \int_{0}^{1}\sqrt{\tau}\cdot \frac{A}{\tau}d\tau\\
\leq& 2A.
\end{split}
\end{equation}

Next, by the monotonicity of the Nash entropy, we have that $0\geq \cN_{x_0,0}(1)\geq \cN_{x_0,0}(2)$, and hence $0\leq -\cN_{x_0,0}(1)\leq -\cN_{x_0,0}(2)$. Hence, we can apply Proposition \ref{Prop: estimate of reduced length} together with (\ref{Equ: estimate of reduced length 1}) to obtain
\begin{equation}\label{Equ: estimate of W_1 distance 2}
\begin{split}
&d^{g(-1)}_{W_1}\left(\delta_{x_0,-1}, \nu_{x_0,0;-1}\right)\\
\leq& C\left(1+\frac{\mathcal{L}(\gamma)}{2\sqrt{0-(-1)}}-\cN_{x_0, 0}(1)\right)^{\frac{1}{2}}\sqrt{0-(-1)}\\
\leq& C\left(1-\cN_{x_0, 0}(1)\right)^{\frac{1}{2}}.\\
\end{split}
\end{equation}
where $C=C(n, A)$. Hence, by applying the Harnack estimate of Nash entropy proved by Bamler, say Lemma \ref{grad}, we have
\begin{equation}\label{Equ: estimate of Nash entropy 1}
\begin{split}
&\left|\cN^*_{-2}(x_0, -1)-\cN^*_{-2}(x_0, 0)\right|\\
\leq& \left(\frac{n}{2((-1)-(-2))}+n\right)^{\frac{1}{2}}d^{g(-1)}_{W_1}\left(\delta_{x_0,-1}, \nu_{x_0,0;-1}\right)+\frac{n}{2}\log\left(\frac{0-(-2)}{(-1)-(-2)}\right)\\
\leq& C \left(C-\cN^*_{-2}(x_0, 0)\right)^{\frac{1}{2}}+C\\
\leq& 100C^2+\frac{1}{100}\left(C-\cN^*_{-2}(x_0, 0)\right)+C.
\end{split}
\end{equation}
Hence we obtain
\begin{equation}\label{Equ: estimate of Nash entropy 2}
\frac{99}{100}\cN_{-2}^*(x_0, 0)\geq \cN_{-2}^*(x_0, -1)-C
\end{equation}
But by the volume non-inflating estimate, say Lemma \ref{noninfl}, since we can assumed without loss of generality that $R\geq -n$, we have
$$A^{-1}\leq \vol_{g(-1)}(B_{g(-1)}(x_0, 1))\leq C\exp\left(\cN_{x_0,-1}(1)\right),$$
which can be written as 
$$\cN_{-2}^*(x_0, -1)=\cN_{x_0, -1}(1)\geq -C(n, A).$$
Combining this with (\ref{Equ: estimate of Nash entropy 2}), we obtain 
\begin{equation}\label{Equ: estimate of Nash entropy 3}
\cN_{-2}^*(x_0, 0)\geq -C,
\end{equation}
for some $C=C(n, A)\leq \infty$.

Next, by the gradient estimate of the Nash entropy in Lemma \ref{grad} , we have
\begin{equation}\label{Equ: estimate of Nash entropy 4}
\left|\cN^*_{-2}(x_0, 0)-\cN^*_{-2}(x, 0)\right|\leq C(n)d_0(x_0, x)\leq C(n)A,
\end{equation}
Combining (\ref{Equ: estimate of Nash entropy 3}) and (\ref{Equ: estimate of Nash entropy 4}) we obtain
$$\cN_{x, 0}(2)=\cN^*_{-2}(x, 0)\geq -C,$$
Then we apply the monotonicity of the Nash entropy to obtain (note that $r\leq 1$)
$$\cN_{x, 0}(r^2)\geq \cN_{x, 0}(2)\geq -C.$$
But we have assumed that $R\leq r^{-2}$ on $B_{g(0)}(x, r)$, we can apply the lower volume bound estimate, say Lemma \ref{noncol}, to obtain that:
$$\vol_{g(0)}(B_{g(0)}(x, r))\geq c(n)\exp\left(\cN_{x, 0}(r^2)\right)r^n\geq \kappa(n, A)r^n,$$
which finishes the proof of the theorem.
\end{proof}


\bigskip
\bigskip

\bigskip
\bigskip


\begin{thebibliography}{99}

\bibitem{Bam20a} Bamler, R., {\em Entropy and heat kernel bounds on a Ricci flow background},arXiv:2008.07093.

\bibitem{Bam20b} Bamler, R., {\em Compactness theory of the space of Super Ricci flows}, arXiv:2008.09298.

\bibitem{Bam20c} Bamler, R., {\em Structure theory of non-collapsed limits of Ricci flows}, arXiv:2009.03243.

\bibitem {Ham} Hamilton, R., {\em Three-manifolds with positive Ricci curvature}, J. Differential Geom, 1982, 17: 255–306.

\bibitem {Per1} Perelman, G., {\em The entropy formula for the Ricci flow and its geometric applications}, preprint, math.DG/0211159.

\bibitem {Per2} Perelman, G., {\em Ricci flow with surgery on three-manifolds}, preprint, math.DG/0303109v1.

\bibitem {Per3} Perelman, G., {\em Finite extinction time for the solutions to the Ricci flow on
certain three manifolds}, preprint, math.DG/0307245

\bibitem{HN} Hein, H. and Naber, A., {\em New logarithmic Sobolev inequalities and an $\varepsilon$-regularity theorem
for the Ricci flow}, Comm. Pure Appl. Math. 67 (2014), no. 9, 1543–1561.

\bibitem{JS} Jian, W. and Song, J., {\em Diameter and Ricci curvature estimates for long-time solutions of the K\"ahler-Ricci flow}, arXiv:2101.04277.

\bibitem{STZ} Song, J., Tian, G. and Zhang, Z.L., {\em Collapsing behavior of Ricci-flat Kahler metrics and long time solutions of the Kahler-Ricci flow}, arXiv:1904.08345.

\bibitem{TiZ} Tian, G. and Zhang, Z.L., {\em Relative volume comparison of Ricci Flow}, Sci. China Math. (2021). https://doi.org/10.1007/s11425-021-1869-5.

\bibitem{W} Wang, B., {\em The local entropy along Ricci flow, Part A: the no-local-collapsing theorems},
Camb. J. Math. 6 (2018), no. 3, 267-346.

\bibitem{Zh} Zhang, Q.S., {\em Sobolev Inequalities, Heat Kernels under Ricci Flow, and the Poincar\'e Conjecture}, Boca Raton: CRC Press, 2011.


\end{thebibliography}
\end{document}